\def\ps@pprintTitle{%
 \let\@oddhead\@empty
 \let\@evenhead\@empty
 \def\@oddfoot{}%
 \let\@evenfoot\@oddfoot}
\newcommand{\vertiii}[1]{{\left\vert\kern-0.25ex\left\vert\kern-0.25ex\left\vert #1
    \right\vert\kern-0.25ex\right\vert\kern-0.25ex\right\vert}}
\newcommand{\supp}[0]{\operatorname{supp}}
\newcommand{\R}{\mathbb{R}}
\newcommand{\F}{\mathcal{F}}
\newcommand{\f}{\frac}
\newcommand{\cm}{\mathcal M}
\theoremstyle{plain}
\newtheorem{theorem}[equation]{Theorem}
\newtheorem{proposition}[equation]{Proposition}
\newtheorem{corollary}[equation]{Corollary}
\newtheorem{lemma}[equation]{Lemma}
\theoremstyle{definition}
\newtheorem{remark}[equation]{Remark}
\newtheorem{example}[equation]{Example}
\numberwithin{equation}{section}
\begin{document}

\begin{frontmatter}


\title{Mixed weak type estimates: Examples and counterexamples related to  a problem of E. Sawyer}

\author[ehu,Ikerbasque,BCAM]{Carlos P\'erez\fnref{Fund1}}
\ead{carlos.perezmo@ehu.es}
\author[UNS]{Sheldy Ombrosi\fnref{Fund2}}
\ead{sombrosi@uns.edu.ar}

\fntext[Fund1]{
This research is supported by the Basque Government through the BERC 2014-2017 program and by Spanish Ministry of Economy and Competitiveness MINECO: BCAM Severo Ochoa excellence accreditation SEV-2013-0323 and the project MTM2014-53850-P.}

\address[ehu]{Department of Mathematics, University of the Basque Country UPV/EHU, Bilbao, Spain} 
\address[Ikerbasque]{IKERBASQUE, Basque Foundation for Science, Bilbao, Spain}
\address[BCAM]{BCAM, Basque Center for Applied Mathematics, Bilbao, Spain}
\address[UNS]{Department of Mathematics, Universidad Nacional del Sur, 
Bah\'ia Blanca, Argentina.} 

\begin{abstract}
We study mixed weighted weak-type inequalities for families of functions,
which can be applied to study classical operators in harmonic analysis
Our main theorem extends the key
result from \cite{CMP2}.
\end{abstract}



\end{frontmatter}





\section{Introduction and main results}

In this work we consider mixed weighted weak-type inequalities of
the form
\begin{equation}\label{desigualdad}
 uv\bigg(\bigg\{x\in \R^n : \frac{|T(fv)(x)|}{v(x)} >t \bigg\}\bigg)
\leq \frac{C}{t}\int_{\R^n} |f(x)|\,Mu(x)v(x)\,dx,
\end{equation}
where  $T$ is either the
Hardy-Littlewood maximal operator or any Calder\'on-Zygmund
operator. Similar inequalities were studied by
Sawyer in \cite{Sa} motivated by the work of Muckenhoupt and
Wheeden \cite{MW} (see also the works \cite{AM} and \cite{MOS}).

E. Sawyer proved that inequality \eqref{desigualdad} holds in
$\R$ when $T=M$ is the Hardy-Littlewood maximal operator assuming that the
weights $u$ and $v$ belong to the class $A_1$.  This
result can be seen as a very delicate extension of the classical
weak type $(1,1)$ estimate. However, the reason why E. Sawyer considered
\eqref{desigualdad} is due to the following interesting
observation. Indeed, inequality \eqref{desigualdad} yields a new proof of the classical
Muckenhoupt's theorem for $M$ assuming that the $A_p$ weights
can be factored (P. Jones's theorem). This means that if $w\in A_p$ then
$w=uv^{1-p}$ for some $u,v \in A_1$.  Now, define the operator
 $f\rightarrow \frac{M(fv)}{v}$ which is bounded on
$L^{\infty}(uv)$ and it is of weak type $(1,1)$ with respect to the measure $uvdx$  by \eqref{desigualdad}. Hence by  the Marcinkiewicz  interpolation theorem we recover
Muckenhoupt's theorem.

In the same paper, Sawyer conjectured that if $T$ is instead the
Hilbert transform the inequality also holds with the same
hypotheses on the weights $u$ and $v$.  This conjecture was proved
in \cite{CMP2}. In fact, it is proved in this paper that the
inequality \eqref{desigualdad} holds for both the Hardy-Littlewood
maximal operator and for any Calder\'on-Zygmund Operator in any
dimension if either the weights $u$ and $v$ both belong to $A_1$ or
$u$ belongs to $A_1$ and $uv \in A_{\infty}$. The method of proof
is quite different from that in  \cite{Sa} (also from \cite{MW})
and it is based on certain ideas from extrapolation that go back
to the work of Rubio de Francia (see \cite{CMP2} and also the
expository paper \cite{CMP3}).
Applications of these results can be found in \cite{LOPTT}. The authors
conjectured in \cite{CMP2} that their results may hold under weaker hypotheses on the weights. To
be more precise,  they proposed that inequality \eqref{desigualdad}
is true if $u\in A_{1}$ and $v\in A_{\infty}$. Very recently, some quantitative estimates in terms of the relevant constants of the weights have been obtained  in \cite{OPR} and some new conjectures have been formulated.

Inequalities like  \eqref{desigualdad}, when $T$ is the Hardy-Littlewood maximal operator, can also be seen
as generalizations of the classical Fefferman-Stein inequality
$$
\left\|  M(f)  \right\|_{L^{1, \infty}(u)} \leq c\;\|f\|_{L^1(Mu)},
$$
where $c$ is a dimensional constant.  However, in Section 3, we will see that
\eqref{desigualdad} does not hold in general even for weights satisfying strong conditions like $v\in
RH_{\infty} \subset A_{\infty}$.

In this work we generalize the extrapolation result in
\cite{CMP3} for a larger class of weights (see Theorem
\ref{theor:extrapol} below). This method of extrapolation is
flexible enough with scope reaching beyond the classical linear operators. Indeed, it can be applied to square
functions, vector valued operators as well as multilinear
singular integral operators. See Section \ref{applications} for
some of these applications. In fact,  the best way to state the extrapolation theorem is without
considering operators and the result can be seen  as a property of
families of functions. Hereafter, $\F$ will denote a family of
ordered pairs of non-negative, measurable functions $(f,g)$. Also
we are going to assume that this family $\F$ of functions,
satisfies the following property: for \textbf{some} $p_0$,
$0<p_0<\infty$, and every $w\in A_\infty$,
\begin{equation} \label{extrapol1}
 \int_{\R^n} f(x)^{p_0} w(x)\,dx \leq
C\int_{\R^n} g(x)^{p_0} w(x)\,dx,
\end{equation}
for all $(f,g)\in\F$ such that the left-hand side is finite, and
where $C$ depends only on the $A_\infty$ constant of $w$. By the
main theorem in \cite{CMP1}, this assumption turns out to be true for {\bf any} exponent $p\in(0,\infty)$ and \textbf{every}
$w\in A_\infty$,
\begin{equation} \label{extrapol2}
 \int_{\R^n} f(x)^p w(x)\,dx \leq
C\int_{\R^n} g(x)^p w(x)\,dx,
\end{equation}
for all $(f,g)\in\F$ such that the left-hand side is finite, and
where $C$ depends only on the $A_\infty$ constant of $w$. See
the papers \cite{CMP1}, \cite{CGMP} and \cite{CMP3} for more
information  and applications and the book \cite{CMP4} for a general account. It is also interesting that both \eqref{extrapol1} and
\eqref{extrapol2} are equivalent to the following vector-valued
version: for all $0<p,q<\infty$ and for all $w\in A_\infty$ we
have
\begin{eqnarray}
\Big\| \Big(\sum_j (f_j)^q\Big)^{\frac{1}{q}} \Big\|_{L^{p}(w)}
&\le& C\, \Big\| \Big(\sum_j (g_j)^q\Big)^{\frac{1}{q}}
\Big\|_{L^{p}(w)},
\label{CMP:v-v}
\end{eqnarray}
for any $\{(f_j,g_j)\}_j\subset \F$, where these estimates hold
whenever the left-hand sides are finite.

Next theorem improves the corresponding Theorem from \cite{CMP2}.

\begin{theorem} \label{theor:extrapol}
Let $\F$ be a family of functions satisfying \eqref{extrapol1} and
let $\theta\geq 1$. Suppose that $u\in A_1$ and that $v$ is a
weight such that for some $\delta>0$, $v^{\delta} \in A_{\infty}$.

Then, there is a constant $C$ such that
\begin{equation} \label{extrapolweak}
 \Big\| \frac{f}{v^{\theta} }\Big\|_{L^{1/\theta,\infty}(uv)}
 \leq
 C\,\Big\| \frac{g}{v^{\theta} }\Big\|_{L^{1/\theta,\infty}(uv)}
, \qquad (f,g)\in\F.
\end{equation}

Similarly, the following vector-valued extension holds: if \,$0<q<\infty$,
\begin{equation}\label{Lp,s:v-v}
 \Big\| \frac{\sum_j (f_j)^q \Big)^\frac1q}{v^{\theta}}\Big\|_{L^{1/\theta,\infty}(uv)}
 \leq
 C\,\Big\| \frac{\sum_j (g_j)^q \Big)^\frac1q}{v^{\theta}
 }\Big\|_{L^{1/\theta,\infty}(uv)},
\end{equation}
for any $\{(f_j,g_j)\}_j\subset \F$.
\end{theorem}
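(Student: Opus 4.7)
The plan is to extend the Rubio de Francia extrapolation scheme of \cite{CMP2} and \cite{CMP3} to the weaker hypothesis $v^\delta\in A_\infty$ (in place of $v\in A_1$) and the parameter $\theta\ge 1$. A preliminary step uses the main theorem of \cite{CMP1} to upgrade \eqref{extrapol1} to the full strong-type extrapolation \eqref{extrapol2}, valid at every exponent $p\in(0,\infty)$ and every $w\in A_\infty$; we will apply it at some $p\in(0,1/\theta)$ chosen close enough to $1/\theta$ that $1-\theta p\in(0,\delta]$. Writing $F=f/v^\theta$, $G=g/v^\theta$, and $E_t=\{F>t\}$, the conclusion \eqref{extrapolweak} reduces to the distributional bound $t\,uv(E_t)^\theta\le C\,\|G\|_{L^{1/\theta,\infty}(uv)}$ for every $t>0$.

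For any nonnegative test function $h$, the pointwise inequality $t^p\chi_{E_t}\le F^p$ gives $t^p\int_{E_t}h\,uv\,dx\le\int f^p\bigl(huv^{1-\theta p}\bigr)\,dx$, and if $w:=huv^{1-\theta p}$ belongs to $A_\infty$ with a controlled constant, \eqref{extrapol2} yields $\int f^pw\,dx\le C\int g^pw\,dx=C\int G^ph\,uv\,dx$. A H\"older inequality in the Lorentz scale on $(\R^n,uv\,dx)$---using that the conjugate of $L^{1/(p\theta),\infty}$ is $L^{1/(1-\theta p),1}$---bounds the last integral by $\|G\|_{L^{1/\theta,\infty}(uv)}^p\cdot\|h\|_{L^{1/(1-\theta p),1}(uv)}$. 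The argument thus reduces to producing $h$ satisfying (i) $\int_{E_t}h\,uv\,dx\gtrsim uv(E_t)$, (ii) $\|h\|_{L^{1/(1-\theta p),1}(uv)}\lesssim uv(E_t)^{1-\theta p}$, and (iii) $w=huv^{1-\theta p}\in A_\infty$ with constant depending only on the relevant weight constants.

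We build $h$ by a Rubio de Francia iteration of a suitably chosen weighted maximal operator---say $M_u$ or $M_{uv}$---bounded on the relevant Lorentz space over $uv$; with the right normalization, conditions (i), (ii), and the auxiliary $A_1$-regularity of $h$ (with respect to $u$ or $uv$) are immediate. For (iii) we exploit three facts: $v^{1-\theta p}=(v^\delta)^{(1-\theta p)/\delta}\in A_\infty$, since $A_\infty$ is closed under $w\mapsto w^s$ for $s\in[0,1]$ and our choice of $p$ gives $(1-\theta p)/\delta\in(0,1]$; $u\in A_1$; and the standard lemma that if $u\in A_1$ and $h$ satisfies the weighted $A_1(u)$ condition, then $hu\in A_1$. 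A Jones-factorization computation then combines these into $w\in A_\infty$. The principal obstacle is precisely (iii): products of $A_1$ and $A_\infty$ weights are not in $A_\infty$ in general, so the RdF iteration must be tuned so that the three factors of $w$ recombine into an $A_r$-weight for some finite $r$. The vector-valued extension \eqref{Lp,s:v-v} then follows from the scalar version applied to the auxiliary pair $\bigl((\sum_j f_j^q)^{1/q},(\sum_j g_j^q)^{1/q}\bigr)$, whose family satisfies \eqref{extrapol1} by the vector-valued extrapolation \eqref{CMP:v-v}.
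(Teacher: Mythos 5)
Your overall skeleton---dualizing the weak norm over $uv\,dx$, majorizing the test function (essentially $\chi_{E_t}$, suitably normalized) by a Rubio de Francia iteration, and feeding the weight $w=h\,u\,v^{1-\theta p}$ into the upgraded hypothesis \eqref{extrapol2}---is the same as the paper's, and your steps (i), (ii) and the vector-valued reduction via \eqref{CMP:v-v} are indeed routine. The genuine gap is exactly the point you flag and then leave open: condition (iii). Iterating $M_u$ or $M_{uv}$ can only produce an $h$ with $hu\in A_1$, or with an $A_1$-type condition relative to the measure $uv\,dx$ (which need not even be locally finite here, e.g.\ $v(x)=|x|^{-2n}$), and, as you note yourself, $hu\in A_1$ together with $v^{1-\theta p}\in A_\infty$ does not place the product in $A_\infty$ (take $hu=|x|^{-n+\eta}$ and $v^{1-\theta p}=|x|^{-n+\eta}$ with $\eta$ small). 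Choosing $p$ with $1-\theta p\le\delta$ only gives the harmless fact that the single factor $v^{1-\theta p}$ lies in $A_\infty$; it does nothing for the product, so the ``Jones-factorization computation'' you invoke has nothing to combine. Saying that ``the RdF iteration must be tuned'' names the missing idea but does not supply it, and it is the crux of the theorem.

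The paper's resolution is structural: factor $v^\delta=v_1v_2$ with $v_1\in A_1$ and $v_2\in RH_\infty$ (so $v_2=\tilde v_2^{1-t}$ with $\tilde v_2\in A_1$, $t>1$), and run Rubio de Francia not with $M_u$ but with $S_qf=M(f\,u\,v_1^{1/q\delta})/(u\,v_1^{1/q\delta})$. Property (c) of the iteration then gives directly $\mathcal{R}h\,u\,v_1^{1/q\delta}\in A_1$, and multiplying this $A_1$ weight by $v_2^{1/q\delta}$, a negative power of an $A_1$ weight, lands in $A_\infty$ by the reverse Jones factorization; this is how the three factors recombine, and it is only possible because the $A_1$ part of $v^\delta$ has been absorbed into the operator and only the $RH_\infty$ part is left outside. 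Moreover, the boundedness of $S_q$ on the relevant Lorentz space $L^{q,1}(uv)$ is itself a substantive step: the paper needs the lemma that $u\,w^{\epsilon}\in A_1$ whenever $u,w\in A_1$ and $0<\epsilon<\epsilon_0([u]_{A_1})$ (to get $u\,v_1^{1/\lambda\delta}\in A_1$, hence an $L^\infty(uv)$ bound, and, after rewriting $\int (S_\lambda f)^{\lambda}uv\,dx=\int M(f\,u\,v_1^{1/\lambda\delta})^{\lambda}\,u^{1-\lambda}v_2^{1/\delta}\,dx$, to get $u^{1-\lambda}v_2^{1/\delta}\in A_\lambda$ for $\lambda$ large, hence an $L^{\lambda}(uv)$ bound by Muckenhoupt's theorem), plus a Marcinkiewicz interpolation theorem in the Lorentz scale with explicit constants to obtain a bound on $L^{q,1}(uv)$ uniform in $q\ge 2\lambda$. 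None of this is delivered by ``say $M_u$ or $M_{uv}$, bounded on the relevant Lorentz space'', so as written your argument does not close at its decisive point.
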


Observe that the singular class of weights $v(x)=|x|^{-nr}$, $r\geq1$, is covered by  the hypothesis of the Theorem  but not in the
corresponding Theorem from \cite{CMP2}.

The proof of \eqref{Lp,s:v-v} is immediate since we can
extrapolate using as initial hypothesis \eqref{CMP:v-v} and then applying
\eqref{extrapolweak}.

\begin{corollary} \label{corol:extrapol}
Let $\F$, $u$ and $\theta\geq 1$ as in the Theorem. Suppose now
that $v_i$, $i=1, \cdots, m$, are weights such that for some
$\delta_i>0$, $v_i^{\delta_i} \in A_{\infty}$, $i=1, \cdots, m$.

Then, if we denote $v=\prod_{i=1}^{m} v_i$
\begin{equation*}
 \Big\| \frac{f}{v^{\theta} }\Big\|_{L^{1/\theta,\infty}(uv)}
 \leq
C\, \Big\| \frac{g}{v^{\theta} }\Big\|_{L^{1/\theta,\infty}(uv)} ,
\qquad (f,g)\in\F.
\end{equation*}
and similarly for \, $0<q<\infty$,
\begin{equation*}
 \Big\| \frac{\sum_j (f_j)^q \Big)^\frac1q}{v^{\theta}}\Big\|_{L^{1/\theta,\infty}(uv)}
 \leq
 C\,\Big\| \frac{\sum_j (g_j)^q \Big)^\frac1q}{v^{\theta}
 }\Big\|_{L^{1/\theta,\infty}(uv)},
\end{equation*}
for any $\{(f_j,g_j)\}_j\subset \F$.

\end{corollary}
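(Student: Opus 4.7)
The plan is to reduce Corollary \ref{corol:extrapol} to Theorem \ref{theor:extrapol}. Setting $v = \prod_{i=1}^m v_i$, both displayed inequalities in the corollary are exactly the scalar and vector-valued conclusions of Theorem \ref{theor:extrapol} applied with this single weight $v$. Hence the only step to justify is that $v$ meets the hypothesis of the theorem, namely, that $v^{\delta}\in A_\infty$ for some $\delta>0$.

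I would pick $\delta>0$ small, specifically $\delta = (1/m)\min_{i}\delta_i$, so that $m\delta \leq \delta_i$ for every $i$. The verification then uses the logarithmic characterization of $A_\infty$: a weight $w$ lies in $A_\infty$ if and only if there is $C>0$ such that
\[
\frac{1}{|Q|}\int_Q w \leq C\exp\Big(\frac{1}{|Q|}\int_Q \log w\Big)
\]
for every cube $Q$. The calculation then combines three ingredients. First, generalized H\"older with $m$ equal exponents $m$ gives
\[
\frac{1}{|Q|}\int_Q v^\delta
= \frac{1}{|Q|}\int_Q \prod_{i=1}^m v_i^\delta
\leq \prod_{i=1}^m \Big(\frac{1}{|Q|}\int_Q v_i^{m\delta}\Big)^{1/m}.
\]
Second, since $m\delta/\delta_i \leq 1$, Jensen's inequality (concavity of $x\mapsto x^{m\delta/\delta_i}$) yields $\frac{1}{|Q|}\int_Q v_i^{m\delta} \leq \big(\frac{1}{|Q|}\int_Q v_i^{\delta_i}\big)^{m\delta/\delta_i}$. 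Third, the logarithmic characterization applied to the hypothesis $v_i^{\delta_i}\in A_\infty$ bounds this last quantity by a multiple of $\exp\big(m\delta\cdot\tfrac{1}{|Q|}\int_Q \log v_i\big)$. Inserting into the H\"older bound and using $\log v = \sum_i \log v_i$ gives
\[
\frac{1}{|Q|}\int_Q v^\delta \leq C \exp\Big(\frac{1}{|Q|}\int_Q \log v^\delta\Big),
\]
which is precisely the logarithmic $A_\infty$ condition for $v^\delta$; hence $v^\delta \in A_\infty$ and Theorem \ref{theor:extrapol} applies.

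There is no serious obstacle here, only bookkeeping. The one observation required is that the logarithmic characterization of $A_\infty$ is well suited to products of weights because the logarithm splits additively, whereas the $A_p$ or reverse H\"older formulations do not recombine cleanly across a product: in the $A_p$ constant the ``dual'' factor $v^{-\delta/(p-1)}$ admits no convenient upper bound from the individual hypotheses $v_i^{\delta_i}\in A_\infty$, so the more direct routes fail even though the end result is elementary.
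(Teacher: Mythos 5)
Your proposal is correct and follows the same reduction as the paper: choose $\delta>0$ small (so that $\sum_i \delta/\delta_i\le 1$) and show $v^\delta=\prod_i v_i^\delta\in A_\infty$, then invoke Theorem \ref{theor:extrapol}. The paper dismisses this step with the phrase ``by convexity,'' and your verification via the logarithmic characterization of $A_\infty$, combined with H\"older and Jensen, is a valid way of making that convexity argument precise.
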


The proof reduces to the Theorem by choosing $\delta>0$ small
enough such that $v^{\delta}=\prod_{i=1}^{m} v_i^{\delta} \in
A_{\infty}$ which follows by convexity since $v_i^{\delta_i} \in
A_{\infty}$, $i=1, \cdots, m$.

To apply Theorem \ref{theor:extrapol} above to some of the classical
operators we need a mixed weak type estimate for the
Hardy-Littlewood maximal operator.  This is the content of next Theorem which was obtained in dimension one by Andersen and
Muckenhoupt in \cite{AM} and by Mart{\'\i}n-Reyes, Ortega Salvador and
Sarri{\'o}n Gavi{\'a}n \cite{MOS} in higher dimensions. In each case the
proof follows as a consequence of a more general result with the
additional hypothesis that $u\in A_1$.  For completeness we will give an independent and direct proof with the advantage that no
condition on the weight $u$ is assumed.

\begin{theorem}\label{theor:max}
Let $u\ge 0$ and $v(x)=|x|^{-nr}$ for some $r>1$. Then there is a
constant $C$ such that for all $t>0$,
\begin{equation} \label{max1}
 uv\bigg(\bigg\{x\in \R^n : \frac{M(fv)(x)}{v(x)} >t \bigg\}\bigg)
\leq \frac{C}{t}\int_{\R^n} |f(x)|\,Mu(x)v(x)\,dx.
\end{equation}
\end{theorem}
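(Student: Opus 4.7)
The plan is to prove the estimate by a Besicovitch covering argument modeled on the Fefferman--Stein weak $(1,1)$ inequality, adapted to the singularity of $v$ at the origin. Let $E_t:=\{x\in\R^n:M(fv)(x)>tv(x)\}$. Since the uncentered maximal operator is pointwise comparable to the centered one (up to a dimensional constant), I can select for each $x\in E_t$ a cube $Q_x$ centered at $x$ with $\frac{1}{|Q_x|}\int_{Q_x}fv\gtrsim tv(x)$. The Besicovitch covering lemma then produces a subfamily $\{Q_j=Q(x_j,\ell_j)\}$ covering $E_t$ with overlap bounded by a dimensional constant $N_n$, so that $uv(E_t)\le\sum_j\int_{Q_j}uv$.

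Split the cubes into \emph{good} ones ($|x_j|\ge\sqrt n\,\ell_j$) and \emph{bad} ones ($|x_j|<\sqrt n\,\ell_j$). On a good cube $|y|\ge|x_j|/2$ for $y\in Q_j$, so $v(y)\le 2^{nr}v(x_j)$. The universal bound $Mu(y)\ge\frac{1}{|Q_j|}\int_{Q_j}u$ valid on $Q_j$, combined with the selection condition $tv(x_j)|Q_j|\lesssim\int_{Q_j}fv$, yields
\[
\int_{Q_j}uv\le 2^{nr}v(x_j)|Q_j|\inf_{Q_j}Mu\le\frac{C}{t}\int_{Q_j}fv\cdot\inf_{Q_j}Mu\le\frac{C}{t}\int_{Q_j}fv\cdot Mu,
\]
and summation using the bounded overlap delivers the target bound for the good part.

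The bad cubes, with $|x_j|<\sqrt n\,\ell_j$, may extend into neighborhoods of the origin where $v$ is not doubling and possibly not locally integrable; they form the technical core of the argument. Since the $Q_j$ are disjoint (Besicovitch) and bad cubes have centers in a ball of radius $\sqrt n\,\ell_j$ around $0$, there are at most $O_n(1)$ bad cubes at each dyadic scale of $\ell_j$; the selection condition rewritten as $\int_{Q_j}fv\gtrsim t\ell_j^{n(1-r)}$ together with this scale-per-scale boundedness forces the geometric summability $\sum_{\mathrm{bad}}\ell_j^{-n(r-1)}\lesssim\|fv\|_1/t$. Splitting each bad $Q_j$ into its safe region $Q_j\cap\{|y|\ge|x_j|/2\}$ (where $v$ is doubling and the good-cube estimate applies) and its unsafe region $Q_j\cap\{|y|<|x_j|/2\}$, the former is handled as before, while the latter must be controlled collectively. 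The main obstacle is this collective control of the unsafe regions: the strategy is to combine the explicit mass formula $\int_{|y|>R}v(y)\,dy=cR^{n(1-r)}$, valid for $r>1$ (which makes $v$ integrable away from the origin), with a telescoping argument across dyadic scales using the above summability. This last step is the delicate part and relies essentially on the strict assumption $r>1$, both for the integrability of $v$ on complements of neighborhoods of $0$ and for the geometric summability of the bad-cube scales.
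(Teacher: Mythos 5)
Your covering-lemma strategy diverges from the paper's proof (which decomposes $\R^n$ into annuli $G_k$ and splits $fv$ into a local, a far, and a near-origin piece), and its easy half is sound: the good cubes, and the safe portions of bad cubes, are handled correctly and correspond to the unproblematic parts of the paper's argument. But the theorem's actual content is exactly the piece you leave open, and the strategy you sketch for it fails as stated. The initial bound $uv(E_t)\le\sum_j\int_{Q_j}uv$ is already untenable for bad cubes: since $r>1$, $v$ is not locally integrable at the origin, so $\int_{Q_j\cap\{|y|<|x_j|/2\}}uv$ can be $+\infty$ (take $u\equiv1$) for every $f$ and $t$; moreover the formula $\int_{|y|>R}v\,dy=cR^{n(1-r)}$ concerns the exterior of a ball, whereas your unsafe regions sit at the origin, so it cannot control them. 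Any workable argument must keep the intersection with $E_t$ and exploit that $E_t$ stays outside a critical ball $B(0,\rho)$ whose radius is dictated by $t$ and the mass of $fv$; this is precisely the role of Lemma \ref{local} in the paper (the choice of $a$ with $\bigl(\int_{|y|\le a^{1/(r-1)}}fv\bigr)a^n=\lambda$), after which $uv$ is only ever integrated over $\{|x|>ca^{1/(r-1)}\}$, where $v$ is integrable.

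The second gap is quantitative: your summability claim $\sum_{\mathrm{bad}}\ell_j^{-n(r-1)}\lesssim\|fv\|_1/t$ involves the unweighted quantity $\|fv\|_1$, which is neither necessarily finite nor dominated by the right-hand side of \eqref{max1} (take $u$ compactly supported, so $Mu(y)\sim|y|^{-n}$ at infinity and $\int f\,Mu\,v$ can be finite while $\int fv=\infty$). So even if your telescoping over dyadic scales were carried out, it would produce a bound by $\|fv\|_1/t$, not by $\frac{C}{t}\int f\,Mu\,v$. The missing idea is the coupling of the $u$-mass to the $fv$-mass: in the paper this is the step where the dyadic tails of $\int_{|x|>ca^{r'-1}}uv$ are rewritten so that $\frac{1}{(c2^ka^{r'-1})^n}\int_{|x|\le c2^ka^{r'-1}}u\,dx\le C\,Mu(y)$ for every $y$ in the critical ball carrying the mass of $fv$, which is what converts the estimate into $\int fv\,Mu$. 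Your sketch never performs such a coupling, so the near-origin contribution --- the crux of Theorem \ref{theor:max}, and the reason the result fails for $r=1$ as noted in Remark \ref{falso} --- remains unproved. (A minor, fixable point: the Besicovitch selection also needs a truncation to keep the cube sidelengths bounded.)
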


\begin{remark}\label{falso}\rm
We remark that the theorem could be false when $r=1$ even in the case $u=1$, see \cite{AM}. However,
we already mentioned that the singular weight  $v(x)=|x|^{-n}$ is included in the extrapolation Theorem
\ref{theor:extrapol}.
\end{remark}

\vskip 2mm \noindent {\bf Acknowledgement.} The authors are grateful
to F. J. Mart\'{i}n-Reyes and P. Ortega-Salvador to point out
reference \cite{MOS}.

\section{Some applications}\label{applications}

In this section we show the flexibility of the method by giving
two applications.

\subsection{The vector-valued case}

Let $T$ be any singular integral operator with standard kernel and
let $M$ be the Hardy-Littlewood maximal function. We are going to
show that starting from the following inequality due to Coifman
\cite{Coi}: for $0<p<\infty$ and $w\in A_\infty$,
\begin{equation}\label{coifman}
\int_{\R^n} |Tf(x)|^p\, w(x)\,d x \le C\,\int_{\R^n} Mf(x)^p\,
w(x)\,d x,
\end{equation}
 combined with the extrapolation Theorem \ref{theor:extrapol}
together with Theorem \ref{theor:max} yields the following
corollary.

\begin{corollary}\label{corol:vv}
Let $u\in A_1$ and $v(x)=|x|^{-nr}$ for some $r>1$. Also let
$1<q<\infty$. Then, there is a constant $C$ such that for all
$t>0$,
\begin{eqnarray*}
uv \bigg(\bigg\{ x\in \R^n : \frac{\Big(\sum_j
M(f_{j}v)(x)^q\Big)^{\frac1q}}{v(x)} >t \bigg\}\bigg)\!\!\! &\leq&
\frac{C}{t}\int_{\R^n} \Big(\sum_j |f_{j}(x)|^q\Big)^\frac1q
\,u(x)v(x)\,dx, \label{max-vv}
\\
uv \bigg(\bigg\{ x\in \R^n : \frac{\Big(\sum_j
|T(f_{j}v)(x)|^q\Big)^{\frac1q}}{v(x)}
>t
\bigg\}\bigg)\!\!\! &\leq& \frac{C}{t}\int_{\R^n} \Big(\sum_j
|f_j(x)|^q\Big)^\frac1q \,u(x)v(x)\,dx.  \label{T-vv}
\end{eqnarray*}
\end{corollary}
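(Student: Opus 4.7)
The plan is to derive both inequalities of Corollary \ref{corol:vv} by applying the vector-valued extrapolation Theorem \ref{theor:extrapol} to suitably chosen families, combined with Coifman's inequality \eqref{coifman} and the scalar Theorem \ref{theor:max}. A preliminary observation is that the power weight $v(x)=|x|^{-nr}$ with $r>1$ satisfies $v^{\delta}\in A_{\infty}$ for all sufficiently small $\delta>0$: indeed $v^{\delta}=|x|^{-nr\delta}$ is an $A_{1}$ weight whenever $0<\delta<1/r$. Together with $u\in A_{1}$, this places us in the setting of Theorem \ref{theor:extrapol} with $\theta=1$.

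For the singular integral inequality the reduction is short. Coifman's inequality \eqref{coifman} says exactly that the family
\[
\F_{T}=\{(|Tf|,\,Mf)\}
\]
satisfies hypothesis \eqref{extrapol1} for every $p_{0}>0$ and every $w\in A_{\infty}$. Applying the vector-valued conclusion \eqref{Lp,s:v-v} of Theorem \ref{theor:extrapol} with $\theta=1$ to the sequence $f_{j}=h_{j}v$ produces
\[
\Bigl\|\tfrac{(\sum_{j}|T(h_{j}v)|^{q})^{1/q}}{v}\Bigr\|_{L^{1,\infty}(uv)}\leq C\,\Bigl\|\tfrac{(\sum_{j}(M(h_{j}v))^{q})^{1/q}}{v}\Bigr\|_{L^{1,\infty}(uv)}.
\]
The right-hand side is then controlled by the maximal-function inequality of the corollary, completing the singular integral case.

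The maximal function inequality is the main point, and its scalar form is already contained in Theorem \ref{theor:max}: from \eqref{max1} and the $A_{1}$ bound $Mu\leq Cu$ one obtains $\|M(fv)/v\|_{L^{1,\infty}(uv)}\leq C\|f\|_{L^{1}(uv)}$. To upgrade this to the $\ell^{q}$-valued estimate, I would apply Theorem \ref{theor:extrapol} once more, this time to the family
\[
\F_{M}=\Bigl\{\Bigl(\bigl(\textstyle\sum_{j}M(g_{j})^{q}\bigr)^{1/q},\,\bigl(\sum_{j}|g_{j}|^{q}\bigr)^{1/q}\Bigr)\Bigr\}.
\]
The required scalar $A_{\infty}$ input \eqref{extrapol1} for $\F_{M}$ is a vector-valued Fefferman-Stein estimate on $A_{\infty}$ weights, which can be produced by the $A_{\infty}$-extrapolation machinery of \cite{CMP1} (equivalence of \eqref{extrapol1}, \eqref{extrapol2}, and \eqref{CMP:v-v}). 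Once this hypothesis is verified, substituting $g_{j}=h_{j}v$ and $\theta=1$ into the vector-valued conclusion \eqref{Lp,s:v-v} produces the first inequality of the corollary.

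The main obstacle is precisely this last step: producing a uniform scalar $A_{\infty}$ strong-type vector-valued estimate for the maximal function to feed into Theorem \ref{theor:extrapol}. The classical Fefferman-Stein theorem is natively stated with $A_{p}$ weights, and the passage to a uniform $A_{\infty}$ bound with constant depending only on $[w]_{A_{\infty}}$ is the delicate point where \cite{CMP1} is essential. Once this input is granted, the rest of Corollary \ref{corol:vv} falls out by routine substitution and the clean reduction of the singular integral inequality to the maximal-function one via Coifman's bound.
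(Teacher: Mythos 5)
Your reduction of the second inequality to the first --- Coifman's estimate \eqref{coifman} for the pairs $(|Tf|,Mf)$ fed into the vector-valued conclusion \eqref{Lp,s:v-v} of Theorem \ref{theor:extrapol} with $\theta=1$ and $f_j\mapsto f_jv$ --- is exactly the paper's argument and is fine, as are the observations that $v^\delta=|x|^{-nr\delta}\in A_1$ for small $\delta>0$ and that $Mu\le [u]_{A_1}u$. The gap is in the maximal-function inequality. The family $\F_M=\bigl\{\bigl((\sum_j M(g_j)^q)^{1/q},(\sum_j|g_j|^q)^{1/q}\bigr)\bigr\}$ does \emph{not} satisfy hypothesis \eqref{extrapol1}: that would be a vector-valued Fefferman--Stein inequality with the \emph{same} weight on both sides and a constant depending only on $[w]_{A_\infty}$, which is false. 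Taking a single nonzero entry it would assert $\|Mg\|_{L^{p_0}(w)}\le C([w]_{A_\infty})\,\|g\|_{L^{p_0}(w)}$ for every $w\in A_\infty$; for $p_0>1$ this forces $w\in A_{p_0}$ (for instance $w(x)=|x|^{n(p_0-1)}$ lies in $A_\infty\setminus A_{p_0}$ and $M$ is unbounded on $L^{p_0}(w)$), and for $p_0\le 1$ it fails already for $w\equiv 1$. The machinery of \cite{CMP1} cannot rescue this: it transfers an $A_\infty$-uniform estimate from one exponent to all others, but it cannot create an estimate that is false at every exponent. So the step you flag as ``delicate'' is in fact unavailable, and the first inequality of the corollary does not follow from your argument as written.

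The paper's proof replaces the right-hand member of the pair: by \cite{CGMP}, for $1<q<\infty$, all $0<p<\infty$ and all $w\in A_\infty$ one has $\bigl\|(\sum_j(Mf_j)^q)^{1/q}\bigr\|_{L^p(w)}\le C\,\bigl\|M\bigl((\sum_j|f_j|^q)^{1/q}\bigr)\bigr\|_{L^p(w)}$, i.e.\ the control function is $M$ applied to the $\ell^q$ norm, not the $\ell^q$ norm itself; this family genuinely satisfies \eqref{extrapol1}. Applying Theorem \ref{theor:extrapol} (the scalar conclusion \eqref{extrapolweak} with $\theta=1$ suffices) to this family and substituting $f_j\mapsto f_jv$ reduces the first inequality of the corollary to an estimate for $\|M(hv)/v\|_{L^{1,\infty}(uv)}$ with $h=(\sum_j|f_j|^q)^{1/q}$, and this is precisely what Theorem \ref{theor:max} provides, with right-hand side $\frac{C}{t}\int h\,Mu\,v\,dx\le \frac{C[u]_{A_1}}{t}\int h\,u\,v\,dx$. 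This is the essential point your proposal misses: the scalar mixed weak-type bound of Theorem \ref{theor:max} must enter as the endpoint input after extrapolation, because no same-weight $A_\infty$ strong-type bound for $M$ exists to serve as the extrapolation hypothesis.
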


Observe that in particular we have the following scalar version,
$$
 uv\bigg(\bigg\{x\in \R^n : \frac{|T(fv)(x)|}{v(x)} >t \bigg\}\bigg)
\leq \frac{C}{t}\int_{\R^n} |f(x)|\,u(x)v(x)\,dx.
$$
This scalar version was proved in \cite{MOS}.

The second inequality of the corollary follows from
the first one by applying inequality \eqref{Lp,s:v-v} in Theorem
\ref{theor:extrapol} with initial hypothesis \eqref{coifman}:
%
$$\sup_{t>0}t uv \bigg(\bigg\{ x\in \R^n : \frac{\Big(\sum_j
|T(f_j)(x)|^q\Big)^{\frac1q}}{v(x)} >t \bigg\}\bigg)\le \\
$$
$$
C \sup_{t>0}t uv \bigg(\bigg\{ x\in \R^n : \frac{\Big(\sum_j
M(f_j)(x)^q\Big)^{\frac1q}}{v(x)} >t \bigg\}\bigg).
$$
%


To prove the first inequality in Corollary \ref{corol:vv} we first
note that in \cite{CGMP} it was shown for $1<q<\infty$ and for all
$0<p<\infty$ and $w\in A_\infty$,
$$
\Big\|\Big(\sum_j (M(f_j))^q\Big)^\frac1q\Big\|_{L^p(w)} \le C\,
\Big\| M  \Big( \big(\sum_j |f_j|^q \big)^\frac1q  \Big)
\Big\|_{L^p(w)}.
$$
To conclude we apply Theorem \ref{theor:extrapol} combined with
Theorem \ref{theor:max}.

\subsection{Multilinear Calder\'on-Zygmund operators:}
We now apply our main results to multilinear Calde\-r\'on-Zygmund
operator. We follow here the theory developed by Grafakos and
Torres in \cite{GT1}, that is, $T$ is an $m$-linear operator such
that $T:L^{q_1}\times\cdots\times L^{q_m} \longrightarrow L^q$,
where $1< q_1,\dots,q_m<\infty$, $0<q<\infty$ and
\begin{equation}\label{exponents}
\frac1q=\frac1{q_1}+\cdots+\frac1{q_m}.
\end{equation}
The operator $T$ is associated with a Calder\'on-Zygmund kernel
$K$ in the usual way:
$$
T (f_1,\dots, f_m)(x) = \int_{\R^n}\cdots \int_{\R^n}
K(x,y_1,\dots, y_m)\, f_1(y_1)\dots f_m(y_m)\,dy_1\dots dy_m,
$$
whenever $f_1,\dots,f_m$ are in $C_0^\infty$ and $x\notin
\bigcap_{j=1}^m\supp f_j$. We assume that $K$ satisfies the
appropriate decay and smoothness conditions (see \cite{GT1}
for complete details). Such an operator $T$ is bounded on any
product of Lebesgue spaces with exponents $1<q_1,\dots,
q_m<\infty$, $0<q<\infty$ satisfying \eqref{exponents}. Further,
it also satisfies weak endpoint estimates when some of the $q_i$'s
are equal to one. There are also weighted norm inequalities for
multi-linear Calder\'on-Zygmund operators; these were first proved
in \cite{GT2} using a good-$\lambda$ inequality and fully characterized in \cite{LOPTT} using the sharp maximal function $\cm$ and a new
maximal type function which plays a central role in the theory,
\begin{equation*}\label{first11}
\cm (f_1,\dots , f_m)(x)=\sup_{\substack{Q\ni x\\
Q\,\,\textup{cube}} } \prod_{i=1}^m\frac{1}{|Q|}\int_Q|f_i(z)|\,
dz,
\end{equation*}
where the supremum is taken over cubes with sides parallel to the axes.
Indeed, one of the main results from \cite{LOPTT} is that for
any $0<p<\infty$ and for any $w\in A_\infty$,
\begin{equation*}\label{GT2:T-M}
\Big\|T(f_1,\dots,f_m)\Big\|_{L^p(w)} \le C\, \Big\|   \cm (f_1,\dots , f_m)  \Big\|_{L^p(w)}.
\end{equation*}

Beginning with these inequalities, we can apply Theorem
\ref{theor:extrapol} to the family \\$ \F\Big(T(f_1,\dots,f_m),\,
\cm (f_1,\dots , f_m)  \Big).$ Hence, if $u\in A_1$ and
$v(x)=|x|^{-nr}$ for some $r>1$.
\begin{equation}\label{multi:v-v-strong}
\Big\|  \frac{ T(f_1,\dots,f_m) }{v^m}
\Big\|_{L^{1/m,\infty}(uv)} \le C\, \Big\| \frac{  \cm (f_1,\dots , f_m)   }{v^m} \Big\|_{L^{1/m,\infty}(uv)}
\end{equation}

\begin{corollary}\label{corol:mult-v-v}
Let $T$ be a multilinear Calder\'on-Zygmund operator as above. Let
$u\in A_1$ and $v(x)=|x|^{-nr}$ for some $r>1$. Then
$$
\Big\|  \frac{ T(f_1,\dots,f_m) }{v^m}
\Big\|_{L^{1/m,\infty}(uv)} \le C\,\prod_{j=1}^m \int_{\R^n}
|f_j|\,u\,dx,. $$

\end{corollary}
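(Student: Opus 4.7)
The plan is to deduce the corollary from equation~\eqref{multi:v-v-strong}, which has already reduced the estimate for the operator $T$ to the corresponding estimate for the multilinear maximal function $\cm$. So it is enough to prove
\begin{equation*}
 \Big\|\frac{\cm(f_1,\dots,f_m)}{v^m}\Big\|_{L^{1/m,\infty}(uv)} \le C \prod_{j=1}^m \int_{\R^n} |f_j|\,u\,dx.
\end{equation*}

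First, I would use the pointwise product bound
$$\cm(f_1,\dots,f_m)(x) \le \prod_{j=1}^m Mf_j(x),$$
which is immediate from the definition of $\cm$, since inside the supremum each factor $\frac{1}{|Q|}\int_Q |f_j|$ is at most $Mf_j(x)$. This reduces the task to controlling $\Big\|\prod_{j=1}^m Mf_j/v\Big\|_{L^{1/m,\infty}(uv)}$.

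Next, I would invoke the weak-type Hölder inequality for Lorentz quasi-norms: since $\frac{1}{1/m}=\sum_{j=1}^m \frac{1}{1}$, one has
$$\Big\|\prod_{j=1}^m \frac{Mf_j}{v}\Big\|_{L^{1/m,\infty}(uv)} \le C_m \prod_{j=1}^m \Big\|\frac{Mf_j}{v}\Big\|_{L^{1,\infty}(uv)}.$$
This is a classical fact about weak-$L^p$ spaces (provable, for example, by a distribution function argument that splits each factor around a suitable level), and the constant depends only on $m$.

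Finally, each factor on the right is handled by Theorem~\ref{theor:max}. Writing $Mf_j = M((f_j/v)\,v)$ and applying the theorem with $f$ replaced by $f_j/v$ gives
$$\Big\|\frac{Mf_j}{v}\Big\|_{L^{1,\infty}(uv)} \le C\int_{\R^n} |f_j|\,Mu\,dx,$$
and since $u\in A_1$ implies $Mu(x)\le \|u\|_{A_1}\,u(x)$ a.e., the last integral is at most $C\int_{\R^n}|f_j|\,u\,dx$. Chaining the three steps yields the corollary. The genuine content is already packaged in \eqref{multi:v-v-strong} (hence in Theorem~\ref{theor:extrapol}) and in Theorem~\ref{theor:max}; the only subtlety here is the non-subadditivity of the weak-$L^{1/m}$ quasi-norm, which requires the Hölder-type constant $C_m$ to be tracked, but since $m$ is fixed this poses no real obstacle.
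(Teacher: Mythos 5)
Your proof is correct and follows essentially the same route as the paper: reduce via \eqref{multi:v-v-strong}, use the pointwise bound $\cm(f_1,\dots,f_m)\le\prod_{j=1}^m Mf_j$, apply the weak-type generalized H\"older inequality with $q_j=1$ and $q=1/m$, and finish each factor with Theorem \ref{theor:max} together with $Mu\le [u]_{A_1}u$. Nothing essential is missing.
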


To prove this corollary we will use the following version of the
generalized Holder's inequality: for $1\leq q_1, \dots, q_m <
\infty$ with
$$\f{1}{q_1}+\dots +\f{1}{q_m}=\f{1}{q},
$$
there is a constant $C$ such that
$$\| \prod_{j=1}^m h_j \|_{ L^{q,\infty}(w)} \leq C\,
\prod_{j=1}^m \|h_j\|_{L^{ q_j,\infty}(w)}. $$
The proof of this inequality follows in a similar way that the
proof of the classic generalized Holder's inequality in $L^{p}$
theory.

Now, if we combine this with \eqref{multi:v-v-strong} and with the trivial observation that
\begin{equation*}\label{e4}
\cm(f_1,\dots , f_m)(x) \le \prod_{i=1}^m M(f_i) \, ,
\end{equation*}

we
have
$$
\Big\|  \frac{ T(f_1,\dots,f_m) }{v^m}
\Big\|_{L^{1/m,\infty}(uv)} \le C\, \prod_{j=1}^m \Big\| \frac{ M
f_j}{v} \Big\|_{ L^{1,\infty}(uv)},
$$
Finally, an application of  Theorem \ref{theor:max} concludes the proof of the
corollary.

\section{counterexamples}\label{contraejemplo}

An interesting point from
Theorem \ref{theor:max} is that if $v(x)=|x|^{-nr}$, $r>1$, the estimate
\begin{equation}\label{desigualdad2}
 uv\bigg(\bigg\{x\in \R^n : \frac{M(fv)(x)}{v(x)} >t \bigg\}\bigg)
\leq \frac{C}{t}\int_{\R^n} |f(x)|\,Mu(x)v(x)\,dx,
\end{equation}
holds for any \,$u\ge 0$. On the other hand we have already mentioned that the same inequality holds
if $u \in A_{1}$ and $v\in A_1$ or $uv\in A_{\infty}$ \cite{CMP2}. In particular, this is the case if $u\in A_{1}$ and
$v\in RH_{\infty}$.  Assuming that $v\in RH_{\infty}$, a natural
question is whether inequality \eqref{desigualdad2} holds with {\bf no} assumption on $u$. This would
improve the classical
Fefferman-Stein inequality. However, we will show in the next example
that this is \textbf{false} in general.

\begin{example}
On the real line we let $v(x)=\sum_{k\in Z}\left| x-k\right| \chi _{ I_{k}}\left(
x\right) $, where $I_{k}$ denotes the interval $\left| x-k\right|
\leq 1/2$.  It is not difficult to see that $v\in RH_{\infty }$. If
we choose
$$u(x)=\sum
_{\substack{ k\in N \\ k>10}}\frac{k}{\log(k)}\chi _{ J_{k}}\left( x\right),$$
where  $J_{k}=
\left[ k+\frac{1}{4k},k+\frac{1}{k}\right] $, and $f=$ $\chi _{ \left[ -1,1
\right] },$
then there is no finite constant $C$ such
that the inequality
\begin{equation}
uv(\left\{ x:Mf\left( x\right) >v(x)\right\} )\leq C\int \left|
f\right| M^{2}u \label{falsa}
\end{equation}
holds. To prove this we will make use of the following observation:
\begin{quote} There is a geometric constant such that
$$
M^{2}w(x) \approx M_{ L\log L}w(x) \qquad x \in \R^n
$$
\end{quote}
where
$$ M_{ L\log L}f(x) = \sup_{Q\ni x}
\|f\|_{ L\log L,Q}$$
and
$$
\|f\|_{L\log L,Q} =\inf\{\lambda >0:
\frac{1}{|Q|}\int_{Q} \Phi(\frac{ |f|}{ \lambda }) \, dx \le 1\}.
$$
with $\Phi(t)=t\log(e+t),$
see \cite{PW} or \cite{G}. Now, it is a computation to see that if
$x\in \lbrack -1,1]$, $M^{2}u(x)\approx M_{ L\log
L}u(x) \leq C$ then the right hand side of  $\left(
\ref{falsa}\right)$ is finite, while the left hand side is infinite. Let us check that. For  $\left| x\right| >2$ we have that $%
Mf\left( x\right) \geq $ $\frac{1}{\left| x\right| }$ and if $x\in
J_{k}\subset I_{k}$ for $k>10$ $\frac{1}{\left| x\right|
}>\frac{1}{2k}$, then it is easy to see that
$(k+\frac{1}{4k},k+\frac{1}{2k})\subset \{x\in J_{k}:$
$Mf(x)>v(x)\}$ and therefore we obtain that
\begin{eqnarray*}
uv(\left\{ x:Mf\left( x\right) >v(x)\right\} ) &>&\sum_{\substack{ k\in N \\ %
k>10}}\frac{k}{\log(k)}\int_{k+\frac{1}{4k}}^{k+\frac{1}{2k}}\left( x-k\right) dx> \\
&>&\sum_{\substack{ k\in N \\ k>10}}\frac{1}{8k \log(k)}=\infty .
\end{eqnarray*}

\end{example}

\section{Proof of Theorem \ref{theor:extrapol}}

The following Lemmas will be useful:

\begin{lemma} \label{lem:epsilon}
If $u\in A_1$, $w\in A_1$, then there exists $0<\epsilon_0<1$
depending only  on $[u]_{A_1}$ such that $uw^\epsilon \in A_1$ for
all $0<\epsilon<\epsilon_0$.

\end{lemma}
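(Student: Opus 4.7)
The plan is to reduce the claim to the pointwise bound $M(uw^\epsilon)(x)\le C\,u(x)w(x)^\epsilon$ a.e., which is equivalent to the $A_1$ condition on $uw^\epsilon$. The two ingredients to combine are (i) the self-improvement of $A_1$, which provides a reverse Hölder exponent $s>1$ depending only on $[u]_{A_1}$, and (ii) the fact that for $0<\alpha\le 1$, $w\in A_1$ implies $w^\alpha\in A_1$ with $[w^\alpha]_{A_1}\le [w]_{A_1}^\alpha$ (a straightforward consequence of Jensen's inequality applied to the concave function $t\mapsto t^\alpha$).

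Concretely, I would fix a cube $Q$ and split the average of $uw^\epsilon$ by Hölder. Let $s=s([u]_{A_1})>1$ be the reverse Hölder exponent of $u$, and set $\epsilon_0 := 1/s'$, so that whenever $0<\epsilon<\epsilon_0$ we have $\epsilon s'<1$. Then, using $\operatorname{ess\,inf}$ to denote the essential infimum,
\begin{equation*}
\frac{1}{|Q|}\int_Q uw^\epsilon \;\le\; \Bigl(\frac{1}{|Q|}\int_Q u^s\Bigr)^{1/s}\Bigl(\frac{1}{|Q|}\int_Q w^{\epsilon s'}\Bigr)^{1/s'}.
\end{equation*}
The first factor is bounded by reverse Hölder and $u\in A_1$:
\begin{equation*}
\Bigl(\frac{1}{|Q|}\int_Q u^s\Bigr)^{1/s} \;\le\; c_1\,\frac{1}{|Q|}\int_Q u \;\le\; c_1[u]_{A_1}\operatorname*{ess\,inf}_Q u.
\end{equation*}
For the second factor, since $\epsilon s'<1$, Jensen's inequality and $w\in A_1$ yield
\begin{equation*}
\Bigl(\frac{1}{|Q|}\int_Q w^{\epsilon s'}\Bigr)^{1/s'} \;\le\; \Bigl(\frac{1}{|Q|}\int_Q w\Bigr)^\epsilon \;\le\; [w]_{A_1}^\epsilon\bigl(\operatorname*{ess\,inf}_Q w\bigr)^\epsilon.
\end{equation*}

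Multiplying and using the trivial bound $\operatorname{ess\,inf}_Q u\cdot(\operatorname{ess\,inf}_Q w)^\epsilon\le \operatorname{ess\,inf}_Q(uw^\epsilon)$ gives
\begin{equation*}
\frac{1}{|Q|}\int_Q uw^\epsilon \;\le\; c_1[u]_{A_1}[w]_{A_1}^\epsilon\,\operatorname*{ess\,inf}_Q(uw^\epsilon),
\end{equation*}
which is exactly $uw^\epsilon\in A_1$ with $[uw^\epsilon]_{A_1}\le c_1[u]_{A_1}[w]_{A_1}^\epsilon$. Since $\epsilon_0=1/s'$ is determined by the reverse Hölder exponent $s$, and $s$ depends only on $[u]_{A_1}$, the threshold $\epsilon_0$ depends only on $[u]_{A_1}$ as claimed.

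There is no real obstacle in this argument; the only ``delicate'' point is the quantitative reverse Hölder inequality for $A_1$ (equivalently, the $A_\infty$ reverse Hölder applied to $u$, using $[u]_{A_\infty}\lesssim[u]_{A_1}$), which is standard and ensures that $s$, and hence $\epsilon_0$, can be chosen to depend only on $[u]_{A_1}$.
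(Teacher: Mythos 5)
Your proof is correct and takes essentially the same approach as the paper: Hölder's inequality on the cube average driven by the reverse Hölder exponent $s$ of $u$ (which depends only on $[u]_{A_1}$), combined with the $A_1$ condition of $w$, with the same threshold $\epsilon_0=1/s'$. The only cosmetic difference is that the paper applies Hölder with exponent $(1/\epsilon)'$ so that $w^{\epsilon s'}=w$ exactly, whereas you fix $s$ and absorb the gap $\epsilon s'<1$ via Jensen; both yield $[uw^\epsilon]_{A_1}\le [u]_{RH_s}[u]_{A_1}[w]_{A_1}^{\epsilon}$.
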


\begin{proof}
Since $u\in A_1$, $u\in RH_{s_0}$ for some $s_0>1$ depending on
$[u]_{A_1}$.  Let $\epsilon_0=1/{s_0}'$ and
$0<\epsilon<\epsilon_0$. This implies that $u\in RH_s$ with
$s=(1/\epsilon)'$.

Then since $u,\,v\in A_1$, for any cube $Q$ and almost every $x\in
Q$,
\begin{eqnarray*}
\lefteqn{\hskip-.3cm \frac{1}{|Q|}\int_Q u(y)w(y)^\epsilon\,dy
 \leq \left(\frac{1}{|Q|}\int_Q u(y)^s\,dy\right)^{1/s}
\left(\frac{1}{|Q|}\int_Q w(y)\,dy\right)^{1/s'}}
\\
&\leq& \frac{[u]_{RH_s}}{|Q|}\int_Q u(y)\,dy
\left(\frac{1}{|Q|}\int_Q w(y)\,dy \right)^{1/s'} \leq
[u]_{RH_s}[u]_{A_1}[w]_{A_1}^\epsilon u(x)w(x)^\epsilon.
\end{eqnarray*}%
Hence $uw^\epsilon \in A_1$ with $[uw^\epsilon]_{A_1}\le
[u]_{RH_s}[u]_{A_1}[w]_{A_1}^\epsilon$.

\end{proof}

We also need the following version of the  Marcinkiewicz
interpolation theorem in the scale of Lorentz spaces. In fact we
need a version of this theorem with precise constants.  The proof
can be found in \cite{CMP2}.

\begin{proposition}\label{prop:interpolation} Given
$p_0$, $1<p_0<\infty$, let $T$ be a sublinear operator such that
$$
\|T f\|_{L^{p_0,\infty}} \le C_0\,\|f\|_{L^{p_0,1}} \qquad \text{
and } \qquad \|T f\|_{L^{\infty}} \le C_1\,\|f\|_{L^{\infty}}.
$$
Then for all $p_0<p<\infty$,
$$
\|T f\|_{L^{p,1}} \le
2^{1/p}\,\big(C_0\,(1/p_0-1/p)^{-1}+C_1\big)\,\|f\|_{L^{p,1}}.
$$
\end{proposition}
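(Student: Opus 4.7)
The plan is to carry out the classical Marcinkiewicz argument in the scale of Lorentz spaces, using the non-increasing rearrangement, and to track constants carefully so that a single factor of $2^{1/p}$ emerges. The starting point is the integral representation $\|h\|_{L^{p,1}} = \int_0^\infty h^*(s)\, s^{1/p-1}\, ds$; substituting $s = 2u$ gives
$$\|Tf\|_{L^{p,1}} = 2^{1/p}\int_0^\infty (Tf)^*(2u)\, u^{1/p-1}\, du,$$
so the factor $2^{1/p}$ in the target constant is already produced here. I would then split $f = f_1^u + f_2^u$ for each $u > 0$, with $f_1^u = f\chi_{\{|f|>f^*(u)\}}$ and $f_2^u = f - f_1^u$. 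Standard properties of the rearrangement give $\|f_2^u\|_{L^\infty}\le f^*(u)$ and $(f_1^u)^*(t)\le f^*(t)\chi_{[0,u]}(t)$. Sublinearity of $T$ combined with the subadditivity property $(\varphi+\psi)^*(s+t)\le\varphi^*(s)+\psi^*(t)$ then yields
$$(Tf)^*(2u) \le (Tf_1^u)^*(u) + (Tf_2^u)^*(u).$$

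The $L^\infty$-endpoint directly gives $(Tf_2^u)^*(u)\le \|Tf_2^u\|_{L^\infty}\le C_1 f^*(u)$. For the first piece, Chebyshev together with the weak-type endpoint gives
$$(Tf_1^u)^*(u) \le u^{-1/p_0}\|Tf_1^u\|_{L^{p_0,\infty}} \le C_0\, u^{-1/p_0}\int_0^u f^*(t)\, t^{1/p_0-1}\, dt.$$
Plugging these into the representation above, the $C_1$-piece immediately contributes $2^{1/p}C_1\|f\|_{L^{p,1}}$, while for the $C_0$-piece I would swap the order of integration by Fubini and evaluate
$$\int_t^\infty u^{1/p-1/p_0-1}\, du = \frac{t^{1/p-1/p_0}}{1/p_0-1/p},$$
where the hypothesis $p > p_0$ is exactly what guarantees convergence at infinity. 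The remaining $t$-integral collapses to $\|f\|_{L^{p,1}}$ and produces the contribution $2^{1/p}C_0(1/p_0-1/p)^{-1}\|f\|_{L^{p,1}}$. Summing the two contributions yields the claimed bound.

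I do not expect a major conceptual obstacle, since this is essentially the classical Marcinkiewicz scheme; the real work is arithmetic. The key design choice is the matching between the level $f^*(u)$ used in the decomposition and the split $2u = u+u$ in the rearrangement inequality, which is what ensures that no further power of $2$ beyond $2^{1/p}$ accumulates after the Fubini step. The only small technical point is the verification of the pointwise bound $(f_1^u)^*(t)\le f^*(t)\chi_{[0,u]}(t)$, which follows from the level-set description of the rearrangement together with the right-continuity relation $\mu_f(f^*(u))\le u$.
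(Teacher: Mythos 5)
Your argument is correct, and it reproduces the stated constant exactly; note that the paper itself does not prove Proposition \ref{prop:interpolation} but defers to \cite{CMP2}, whose proof is precisely this classical Hunt--Calder\'on style argument: cut $f$ at height $f^*(u)$, use $(Tf)^*(2u)\le (Tf_1^u)^*(u)+(Tf_2^u)^*(u)$ together with the two endpoint bounds, and conclude by Fubini. So your proposal is essentially the same proof as the one the paper relies on.
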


Fix $u\in A_1$ and $v$ such that $v^{\delta} \in A_{\infty}$ for
some $\delta>0$. Then by the factorization theorem $v^{\delta}=v_1
v_2$ for some $v_1 \in A_1$  and $v_2 \in RH_{\infty}$.  Define
the operator $S_{\lambda}$ by
$$ S_{\lambda}f(x)= \frac{M(fuv_1^{1/\lambda\delta})}{uv_1^{1/\lambda\delta}}
$$
for some large enough constant $\lambda>1$ that will be chosen
soon.

By Lemma \ref{lem:epsilon}, there exists $0<\epsilon_0<1$ (that
depends only on $[u]_{A_1}$) such that $u\,w^{\epsilon}\in A_1$
for all $w\in A_1$ and $0<\epsilon<\epsilon_0$.

Choose $\lambda>\frac{1}{\delta \epsilon_0}$  such that
$uv_1^{1/\lambda\delta} \in A_1$. Hence, $S_{\lambda}$ is bounded
on $L^{\infty}(uv)$ with constant $C_1=[u]_{A_1}$. We will now
show that for some larger $\lambda$, $S_{\lambda}$ is bounded on
$L^{m}(uv)$. Observe that
$$
\int_{\R^n}\! Sf(x)^{\lambda}\, u(x)\,v(x)\,dx = \int_{\R^n}\!
M(fuv_1^{1/\lambda\delta})(x)^{\lambda}\,u(x)^{1-\lambda}\,v_2(x)^{1/\delta}
\,dx.
$$
Since $v_2=\tilde{v}_2^{1-t}$  for some $\tilde{v}_2 \in A_{1}$
and $t>1$ we have
$$u^{1-\lambda}\,v_2^{1/\delta}= u^{1-\lambda}\,\tilde{v}_2^{\frac{1-t}{\delta}}=
\big(u\, \tilde{v}_2^{\frac{t-1}{\delta(\lambda-1)}}
\big)^{1-\lambda}.
$$
By Lemma \ref{lem:epsilon} there exists $\lambda$ sufficiently large ($\lambda>1+\frac{t-1}{\delta \epsilon_0}$) such that $
u\, \tilde{v}_2^{\frac{t-1}{\delta(\lambda-1)}} \in A_1$ and hence
$u^{1-\lambda}\,v_2^{1/\delta} \in A_{\lambda}$. By Muckenhoupt's
theorem, $M$ is bounded on
$L^{\lambda}(u^{1-\lambda}v_2^{1/\delta} )$ and therefore $S$ is
bounded on $L^{\lambda}(uv)$ with some constant $C_0$. Observe
that $\lambda$ depends on the $A_1$ constant of $u$. We fix one
such $\lambda$ from now on.

By Proposition \ref{prop:interpolation} above we have that
$S$ is bounded on $L^{q,1}(uv)$, $q>\lambda$. Hence,
$$
\|S f\|_{L^{q,1}(uv)} \le
2^{1/q}\,\big(C_0\,(1/\lambda-1/q)^{-1}+C_1)\,\|f\|_{L^{q,1}(uv)}.
$$
Thus, for all $q\ge 2\lambda$ we have that $\|S
f\|_{L^{q,1}(uv)}\le K_0\,\|f\|_{L^{q,1}(uv)}$ with
$K_0=4\lambda\,(C_0+C_1)$. We emphasize that the constant $K_0$ is
valid for every $q\ge 2\lambda$.

Fix $(f,g)\in\F$ such that the left-hand side of
\eqref{extrapolweak} is finite. We let $r$ be such that  $\theta<r<\theta
(2\lambda)'$, to be chosen soon. Now, by the duality
of $L^{r,\infty}$ and $L^{r',1}$,
$$
\big\|f\,v^{-\theta}\big\|_{L^{1/\theta,\infty}(uv)}^\frac{1}{r} =
\big\| (f\,v^{-\theta})^\frac{1}{r}
\big\|_{L^{r/\theta,\infty}(uv)} = \sup \int_{\R^n}
f(x)^{\frac{1}{r}}\,h(x)\,u(x)\,v(x)^{1-\theta/r}\,dx,
$$
where the supremum is taken over all non-negative $h \in
L^{(\frac{r}{\theta})',1}(uv)$ with
$\|h\|_{L^{(\frac{r}{\theta})',1}(uv)}=1$.  Fix such a function
$h$. We are going to build a larger function $\R h$ using the
Rubio de Francia`s method such $\R h\, uv^{1-\theta/r}\in
A_\infty$. Hence we will use the hypothesis \eqref{extrapol2} with
$p=\theta/r$ (recall that is equivalent to \eqref{extrapol1}) with
the weight $\R h\, uv^{1-\theta/r}\in A_\infty$

We let $r$ be such that $(\frac{r}{\theta})'> 2\lambda$ and hence
$S_{(\frac{r}{\theta})'}$ is bounded on
$L^{(\frac{r}{\theta})',1}(uv)$ with constant bounded by $K_0$.
Now apply the Rubio de Francia algorithm (see \cite{GCRdF}) to
define the operator $\R$ on $ h\in L^{(\frac{r}{\theta})',1}(uv)$,
$h\geq 0$, by
$$
\R h(x) = \sum_{j=0}^\infty \frac{S_{(\frac{r}{\theta})'}^j
h(x)}{2^j\, K_0^j},
$$
Recall that the operator $S_{(\frac{r}{\theta})'}$ is defined by
$$ S_{(\frac{r}{\theta})'}f(x)= \frac{M(fuv_1^{1/(\frac{r}{\theta})'\delta})}{uv_1^{1/(\frac{r}{\theta})'\delta}}.
$$
Also, recall that  by the choice of $r$
$uv_1^{1/(\frac{r}{\theta})'\delta} \in A_1$.

It follows immediately from this definition that:
\begin{list}{$(\theenumi)$}{\usecounter{enumi}\leftmargin=1.4cm
\labelwidth=.7cm\labelsep=0.25cm \itemsep=0.2cm \topsep=.1cm
\renewcommand{\theenumi}{\alph{enumi}}}

\item $h(x)\le \R h(x)$;

\item $\|\R h\|_{L^{(\frac{r}{\theta})',1}(uv)}\le
2\,\|h\|_{L^{(\frac{r}{\theta})',1}(uv)}$;

\item $S_{(\frac{r}{\theta})'}(\R h)(x)\le 2\,K_0\, \R h(x)$.
\end{list}
In particular, it follows from $(c)$ and the definition of $S$
that $\R h\, uv_1^{1/(\frac{r}{\theta})'\delta} \in A_1$ and
therefore $\R h\, uv^{1/(\frac{r}{\theta})'}= \R h\,
uv_1^{1/\delta (\frac{r}{\theta})'}v_2^{1/\delta
(\frac{r}{\theta})'} \in A_\infty$.

To apply the hypothesis \eqref{extrapol2} we must first check that
the left-hand side is finite, but this follows at once from
H\"older's inequality and $(b)$:
\begin{multline*}
\int_{\R^n} f(x)^{\frac{1}{r}}\,\R
h(x)\,u(x)\,v(x)^{1-\frac{\theta}{r}}\,dx  \le
\big\|(f\,v^{-\theta})^{\frac1r}\big\|_{L^{r/\theta,\infty}(uv)}\,\|\R
h\|_{L^{(r/\theta)',1}(uv)}
\\
\le
2\,\big\|f\,v^{-\theta}\big\|_{L^{1/\theta,\infty}(uv)}^{\frac1r}\|h\|_{L^{(\frac{r}{\theta})',1}(uv)}
< \infty.
\end{multline*}
Thus since $\R h\, uv^{1/(\frac{r}{\theta})'} \in A_\infty$ by
\eqref{extrapol2}
\begin{align*}
 \int_{\R^n}
f(x)^{\frac{1}{r}}\,h(x)\,u(x)\,v(x)^{1-\frac{\theta}{r}}\,dx &
\leq \int_{\R^n} f(x)^{\frac{1}{r}}\,\R
h(x)\,u(x)\,v(x)^{1-\frac{\theta}{r}}\,dx
\\
&\leq  C\,\int_{\R^n} g(x)^{\frac{1}{r}}\,\R
h(x)\,u(x)\,v(x)^{1-\frac{\theta}{r}}\,dx \\
& \leq C\,\big\| (g\,v^{-\theta})^\frac{1}{r}
\big\|_{L^{r/\theta,\infty}(uv)} \,
\|\R h\|_{L^{(\frac{r}{\theta})',1}(uv)}\\
& \le  2\,C\,\big\|
g\,v^{-\theta}\big\|_{L^{1/\theta,\infty}(uv)}^{\frac{1}{r}}.
\end{align*}
Since $C$ is independent of $h$, inequality \eqref{extrapolweak}
follows finishing the proof of the theorem.

\section{Proof of Theorem \ref{theor:max}}\label{section:max}

\subsection{Proof of \eqref{max1} }

The following lemma is important in the proof.

\begin{lemma}\label{local}
Let $f$ be a positive and locally integrable function. Then for
 $r>1$ there exists a positive real number $a$ depending on $f$ and
$\lambda $ such that the inequality
\begin{equation*} \left(
\int_{\left| y\right| \le a^{\frac{1}{r-1}}}f(y)dy\right)
a^n=\lambda
\end{equation*}
holds.
\end{lemma}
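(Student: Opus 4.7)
The plan is to view the problem as a continuity/intermediate value argument. Define the auxiliary function
\[
\phi(a) \;=\; a^n \int_{|y|\le a^{1/(r-1)}} f(y)\,dy, \qquad a>0,
\]
and show that $\phi:(0,\infty)\to(0,\infty)$ is continuous, vanishes at $0^+$, and blows up at $\infty$. The conclusion then follows from the intermediate value theorem applied to $\phi(a)=\lambda$.

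First I would verify continuity of $\phi$. The prefactor $a^n$ is obviously continuous, so the work is to check that $a \mapsto \int_{|y|\le a^{1/(r-1)}} f(y)\,dy$ is continuous. Since $r>1$ the map $a \mapsto a^{1/(r-1)}$ is continuous and strictly increasing, and for any $a_0>0$ we can dominate the integrand on a neighborhood of $a_0$ by $f \cdot \chi_{\{|y|\le 2a_0^{1/(r-1)}\}}$, which is integrable by local integrability of $f$. Dominated convergence (or monotone convergence, since the region is monotone in $a$) then gives continuity.

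Next, the two limits. As $a\to 0^+$, the prefactor $a^n\to 0$ and the integral tends to $0$ as well (monotone convergence, as the ball shrinks to the origin), so $\phi(a)\to 0$. As $a\to\infty$, since $f$ is positive and locally integrable there exists $R_0>0$ with $\int_{|y|\le R_0} f(y)\,dy =: c >0$; for all $a$ large enough that $a^{1/(r-1)} \ge R_0$ we get $\phi(a) \ge c\, a^n$, which tends to $\infty$.

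Given these three facts, the intermediate value theorem produces some $a\in(0,\infty)$ with $\phi(a)=\lambda$, which is exactly the claimed identity. The only minor subtlety I anticipate is the interpretation of ``positive'': one needs $f$ to be not a.e.\ zero on some ball around the origin so that $\phi$ is eventually positive; this is implicit in the application the authors have in mind, and is guaranteed under any reasonable reading of ``positive and locally integrable.''
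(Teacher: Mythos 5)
Your proposal is correct and follows essentially the same route as the paper: the authors also consider $g(a)=a^n\int_{|y|\le a^{1/(r-1)}}f(y)\,dy$, note it is continuous and nondecreasing with $g(0)=0$ and $g(+\infty)=+\infty$, and conclude by the intermediate value theorem. Your write-up merely supplies the details (dominated/monotone convergence for continuity, positivity of $f$ near the origin for the blow-up at infinity) that the paper leaves implicit.
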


\begin{proof}
\bigskip Consider the function
\begin{equation*}
g(a)=\left( \int_{\left| y\right| \le
a^{\frac{1}{r-1}}}f(y)dy\right) a^n\text{, for }a\geq 0,
\end{equation*}
then by the hypothesis we have that $g$ is a continuous and non
decreasing function. Furthermore, $g(0)=0 $, and %
$g(+\infty)=+\infty $, and therefore by the mean value theorem
there exists $a$ which satisfies the  conditions of lemma.
\end{proof}

Let $u\ge 0$ and $v(x)=|x|^{-nr}$ with $r>1$. By homogeneity we can
assume that $\lambda=1$. Also, for simplicity we denote $g=fv$.  Now, for each integer $k$ we denote $G_{k}=$ $\left\{ 2^{k}<\left|
x\right| \leq 2^{k+1}\right\} $, $I_{k}=$ $\left\{ 2^{k-1}<\left|
x\right| \leq
2^{k+2}\right\} $, $L_{k}=$ $\left\{ 2^{k+2}<\left| x\right| \right\} $, $%
C_{k}=\left\{ \left| x\right| \leq 2^{k-1}\right\} .$

It will be enough to prove the following estimates

\begin{equation}
\sum_{k\in Z}uv\left\{ x\in
G_{k}: M(g\chi _{I_{k}}) (x) >\frac{1}{\left| x\right|
^{nr}}\right\} \leq C_{r,n} \int g\,Mu,  \label{fac}
\end{equation}

\begin{equation}
\sum_{k\in Z}uv\left\{ x\in
G_{k}: M(g\chi _{L_{k}})(x) >\frac{1 }{\left| x\right|
^{nr}}\right\} \leq C_{r,n}\int g\,Mu,  \label{med}
\end{equation}
\begin{equation}
\sum_{k\in Z}uv\left\{ x\in
G_{k}:M(g\chi _{C_{k}})(x) >\frac{1}{\left| x\right|
^{nr}}\right\} \leq C_{r,n}\int g\,Mu.  \label{dif}
\end{equation}

Taking into account that in $G_{k},$ $ v(x)=\frac{1}{\left|%
x\right| ^{nr}}\sim 2^{-knr}$, using the $(1,1)$ weak type
inequality of $M$ with respect to the pair of weights $(u,Mu)$ and
since the subsets $I_{k}$ overlap at most three times we
obtain $\left( \ref{fac}\right) $.

To prove inequality \eqref{med}  we will estimate
$M(g\chi _{L_{k}}) (x) $. Observe that if $x$ belongs to
$G_{k}$ and $y\in L_{k}=$ $\left\{ 2^{k+2}<\left| y\right|
\right\} ,$ and if $\left| y-x\right| \leq \rho $, we have that
$\frac{\left| y\right| }{2}\leq \rho $,
\begin{equation*}
\frac{1}{\rho ^{n}}\int_{\left| y-x\right| \leq \rho }g(y)\chi
_{L_{k}}\left( y\right) dy\leq C_{n}\int_{2^{k+2}<\left| y\right| }\frac{%
g(y)}{ \left| y\right|^n  }dy \leq C_{n}\int_{|x|<\left| y\right| }\frac{%
g(y)}{ \left| y\right|^n  }dy.
\end{equation*}
If we denote $F(x)= \int_{ |x|<|y| }\frac{%
g(y)}{ \left| y\right|^n  }dy$ the left hand side of \eqref{med}
is bounded by
$$
\sum_{k\in Z}2^{-krn} u\left\{ x\in \mathbb{R}^{n}: F(x)
>C\,2^{-knr} \right\} \approx \int_{0}^{\infty}  t
u\left\{ x\in \mathbb{R}^{n}: F(x)
>t \right\} \frac{dt}{t}
$$
$$
= \int_{\mathbb{R}^{n}} F(x)\,u(x)dx
= \int_{\mathbb{R}^{n}}  \int_{ |x|<|y| }\frac{%
g(y)}{ \left| y\right|^n  }dy   \,u(x)dx
$$
$$
=\int_{\mathbb{R}^{n}} g(y)\,\frac{1}{ \left| y\right|^n  }\int_{
|x|<|y| }   \,u(x)dx\, dy \le C\int_{\mathbb{R}^{n}}
g(y)\,Mu(y)dy.
$$

To prove \eqref{dif} we estimate $M(g\chi _{C_{k}}) (x) $ for $x\in
G_{k}.$ Indeed, if $y\in C_{k}$, $2\left| y\right|
<\left|
x\right| $ and since $%
M(g\chi _{C_{k}})(x) \leq \frac{c_n}{\left| x\right| ^{n}}%
\int_{C_{k}}g(y)dy$, we obtain
\begin{equation*}
M(g\chi _{C_{k}})(x) \leq \frac{C}{\left| x\right| ^{n}}%
\int_{C_{k}}g\leq \frac{C}{\left| x\right| ^{n}}\int_{\left|
y\right| \leq \frac{\left| x\right| }{2}}g,
\end{equation*}
Thus, since the subsets $G_{k}$ are disjoint, the left hand side in \eqref{dif} is bounded by
$$
uv\left\{ x\in \mathbb{R}^{n}:\frac{C}{%
\left| x\right| ^{n}}\int_{\left| y\right| \leq \frac{\left| x\right| }{2}}g>%
\frac{1 }{\left| x\right| ^{nr}}\right\}.
$$

Now, if $a$ denotes the positive real number that appears in Lemma
\ref{local} (i.e., $a$ satisfies
 $1 =\left( \int_{\left| y\right| \leq a^{%
\frac{1}{r-1}}}g\right) a^n$, we express the last integral in the
following way:
\begin{gather}
uv\left( \left\{ x:\frac{C}{\left|
x\right|
^{n}}\int_{\left| y\right| \leq \frac{\left| x\right| }{2}}g>\frac{1 }{%
\left| x\right| ^{nr}}\right\} \right) =uv%
\left( \left\{ \left| x\right| \leq
a^{\frac{1}{r-1}}:\frac{C}{\left|
x\right| ^{n}}\int_{\left| y\right| \leq \frac{\left| x\right| }{2}}g>\frac{%
1 }{\left| x\right| ^{nr}}\right\} \right) +  \label{serie} \\
+\sum_{k=0}^{\infty }uv\left(
\left\{ x:
2^{k}a^{\frac{1}{r-1}}<\left| x\right| \leq 2^{k+1}a^{\frac{1}{r-1}} \,\,\mbox{and}\,\, \frac{C%
}{\left| x\right| ^{n}}\int_{\left| y\right| \leq \frac{\left| x\right| }{2}%
}g>\frac{1 }{\left| x\right| ^{nr}}\right\} \right) \notag
\end{gather}

If $\left| x\right| \leq a^{\frac{1}{r-1}},$ since $\left|
y\right| \leq \frac{\left| x\right| }{2}$ we have that
$\left| y\right| \leq a^{\frac{1}{r-1}}$, thus the set %

$$\left\{ \left| x\right| \leq a^{\frac{1}{r-1}}:%
\frac{C}{\left| x\right| ^{n}}\int_{\left| y\right| \leq
\frac{\left| x\right| }{2}}g>\frac{1 }{\left| x\right|
^{nr}}\right\} \subset \left\{ \left| x\right| \leq
a^{\frac{1}{r-1}}:\left| x\right| ^{n(r-1)}>C\left( \int_{\left|
y\right| \leq a^{\frac{1}{r-1}}}g\right) ^{-1} \right\}. $$

Taking into account the last inclusion and since $\left( \int_{\left|
y\right| \leq a^{\frac{1}{r-1}}}g\right) ^{-1}=a^n$, the first
summand in the second term in $\left( \ref{serie}\right) $ is
bounded by
$$
uv(\{ \left| x\right|
^{r-1}>Ca \} ) =
uv(\{ |x|
>ca^{r'-1}\} ).
$$
Using again Lemma \ref{local}, the last term can be estimated by
$$
\int_{|x|>C\,a^{r'-1}} uv\,dx \le C\,
\sum_{k=1}^{\infty }\frac{ 1  }{(2^ka^{r'-1})^{nr}} \int_{
c2^{k-1}a^{r'-1} \le   |x|< c2^{k}a^{r'-1}} u(x)\,dx \le
$$

$$
\le C\,\sum_{k=1}^{\infty }\frac{ 1  }{2^{k(r-1)n}}\frac{1}{a^n}
\frac{1}{ (c2^ka^{r'-1})^n  }\int_{|x|\le c2^ka^{r'-1}} u(x)\,dx
$$
$$
=C \sum_{k=1}^{\infty }\frac{ 1  }{2^{k(r-1)n}} \int_{|y|\leq
a^{r'-1}} g(y)\,dy \frac{1}{ (c2^ka^{r'-1})^n }\int_{|x|\le
c2^ka^{r'-1}} u(x)\,dx,
$$
and this is bounded by
$$
\le C\sum_{k=1}^{\infty }\frac{ 1  }{2^{k(r-1)n}} \int_{|y|\leq
a^{r'-1}} g(y)Mu(y)\,dy \le C\, \int g\,Mu. $$

To finish, we must estimate the series in \eqref{serie}. It is
clear that sum is bounded by
$$
\sum_{k=0}^{\infty }uv\left(
\left\{  x \in 2^{k}a^{r'-1}<| x| \leq 2^{k+1}a^{r'-1} \right\}
\right) \le C\, \sum_{k=0}^{\infty }\frac{ 1 }{(2^ka^{r'-1})^{nr}}
\int_{ 2^{k-1}a^{r'-1} \le   |x|< 2^{k}a^{r'-1}} u\,dx
$$
and arguing as before we conclude the proof of
$\left(\ref{dif}\right) $.

\begin{remark}\rm
We observe that the proof only uses the following conditions for a
sublinear operator $T$: a) $T$ is of weak type $(1,1)$ with
respect to the pair of weights $(u,Mu)$ and b) $T$ is a convolution
type operator such that the associated kernel satisfies the usual
standard condition:
$$
|K(x)|\le \frac{c}{|x|^n}.
$$
In particular if $u \in A_1$, this observation can be applied to
the usual Calder{\'o}n-Zygmund singular integral operators and
moreover to the strongly singular integral operators (see
\cite{Ch} and \cite{F}).

\end{remark}

\section*{References}


\begin{thebibliography}{00}



\bibitem[AM]{AM} K. Andersen, B. Muckenhoupt {\em Weighted weak type
Hardy inequalities with applications to Hilbert transforms and
maximal functions}, Studia Math. 72 (1982), no. 1, 9-26.

\bibitem[Ch]{Ch}  S. Chanillo, \textit{Weighted norm inequalities for strongly
singular convolution operators}. Trans. Am. Math. Soc. V 281,
$\left( 1984\right).$


\bibitem[Coi]{Coi} R.R. Coifman, {\em Distribution function inequalities
for singular integrals}, Proc. Acad. Sci. U.S.A. \textbf{69}
(1972), 2838--2839.


\bibitem[CMP1]{CMP1} D. Cruz-Uribe, J.M. Martell and C. P\'erez, {\em Extrapolation
results for $A_\infty$ weights and applications}, J. Funct. Anal.\
213 (2004) 412--439.

\bibitem[CMP2]{CMP2} D. Cruz-Uribe, J.M. Martell and C. P\'erez, {\em
Weighted weak-type inequalities and a conjecture of Sawyer,}
Int. Math. Res. Not., {\bf 30} 2005,
1849-1871.

\bibitem[CMP3]{CMP3} D. Cruz-Uribe, J.M. Martell y C. P\'erez, {\em Extensions of
Rubio de Francia's extrapolation theorem,}  Collect. Math.,
\textbf{57} (2006) 195-231.

\bibitem[CMP4]{CMP4}
D. Cruz-Uribe, J.M. Martell and C. P\'erez, {\it Weights, Extrapolation and the Theory of Rubio de Francia}, Birkhauser Basel, (2011).




\bibitem[CGMP]{CGMP} G.P. Curbera, J. Garc{\'\i}a-Cuerva, J.M. Martell and C. P\'erez,
{\em Extrapolation with Weights, Rearrangement Invariant Function
Spaces,  Modular inequalities and applications to Singular
Integrals}, Adv. Math., \textbf{203} (2006) 256-318.


\bibitem[F]{F}  C. Fefferman. \textit{Inequalities for strongly singular
convolution operator}, Acta Math. 124 (1970) 9-36.


\bibitem[GCRdF]{GCRdF} J. Garc\'\i a-Cuerva and J.L. Rubio de Francia, {\em
Weighted Norm Inequalities and Related Topics}, North Holland
Math. Studies 116, North Holland, Amsterdam, 1985.


\bibitem[G]{G} L.~Grafakos, {\em Modern Fourier
Analysis}, Springer-Verlag, Graduate Texts in Mathematics
250, Third Edition, (2014).



\bibitem[GT1]{GT1} L.\ Grafakos and R.\ Torres, {\em Multilinear
Calder\'on-Zygmund theory}, Adv.  Math. 165 (2002), 124--164.

\bibitem[GT2]{GT2} L. Grafakos and R. Torres, {\em Maximal operator
and weighted norm inequalities for multilinear singular
integrals}, Indiana Univ. Math. J. 51 (2002), no. 5, 1261--1276.

\bibitem[LOPTT]{LOPTT} A.K. Lerner, S. Ombrosi, C. P\'erez, R.H. Torres and R. Trujillo-Gonz\'alez, {\em New maximal functions and multiple weights for the multilinear     Calder\'on-Zygmund theory}, Adv. Math. \textbf{220}, 1222-1264 (2009).



\bibitem[MOS]{MOS} F. J. Mart{\'\i}n-Reyes, P. Ortega Salvador, M. D. Sarri{\'o}n Gavil{\'a}n,
{\em Boundedness of operators of Hardy type in $\Lambda\sp {p,q}$
spaces and weighted mixed inequalities for singular integral
operators}, Proc. Roy. Soc. Edinburgh Sect. A 127 (1997), no. 1,
157--170.


\bibitem[MW]{MW} B.~Muckenhoupt and R.~Wheeden,
{\em Some weighted weak-type inequalities for the Hardy-Littlewood
maximal function and the Hilbert transform}, Indiana Univ. Math. J.
\textbf{26} (1977), 801--816.

\bibitem[OPR]{OPR} S. Ombrosi, C. P\'erez and J. Recchi,
{\em Quantitative weighted mixed weak-type inequalities for classical operators,} Indiana Univ. Math. J.  {\bf 65} (2016), 615--640.



\bibitem[PW]{PW} C. P\'erez and R. Wheeden, {\em Uncertaitny principle estimates
for vector fields}, Journal of Functional Analysis {\bf 181}
(2001), 146--188.

\bibitem[Sa]{Sa} E.T.~Sawyer, {\em A weighted weak type inequality for
the maximal function}, Proc. Amer. Math. Soc. 93 (1985), 610--614.


\end{thebibliography}
\end{document}